\newtheorem{theorem}{Theorem}%[section]
\newtheorem{corollary}[theorem]{Corollary}
\newtheorem{proposition}[theorem]{Proposition}
\theoremstyle{definition}
\def\R{\mathbb{R}}
\def\Z{\mathbb{Z}}
\DeclareMathOperator{\Kh}{Kh}
\DeclareMathOperator{\CKh}{CKh}
\def\minus{\smallsetminus}
\def\co{\colon\thinspace}
 \DeclareMathOperator{\id}{id}
\DeclareMathOperator{\Kob}{Kob}
\definecolor{darkgreen}{rgb}{0,0.5,0}
\definecolor{purple}{rgb}{0.5,0,0.5}
\numberwithin{equation}{section}
\begin{document}

\title{Khovanov homology and ribbon concordances}

\author{Adam Simon Levine}
% \thanks{ASL was partially supported by NSF grant DMS-1806437. He thanks Radmila Sazdanovic for
% helpful conversations about the functoriality of Khovanov homology.}
\address{Department of Mathematics, Duke University, Durham, NC 27708}
\email{alevine@math.duke.edu}

\author{Ian Zemke}
\address{Department of Mathematics, Princeton University, Princeton, NJ 08544}
\email{izemke@math.princeton.edu}

\thanks{ASL was partially supported by NSF grant DMS-1806437. IZ was partially supported by
NSF grant DMS-1703685. The authors are grateful to Radmila Sazdanovic for helpful conversations
about the functoriality of Khovanov homology and to the referee for thoughtful comments.}

\begin{abstract}
We show that a ribbon concordance between two links induces an injective map on Khovanov homology.
\end{abstract}

\maketitle

If $K_0$ and $K_1$ are knots in $S^3$, a \emph{concordance} from $K_0$ to $K_1$ is a smoothly
embedded annulus $C \subset S^3 \times [0,1]$ with boundary $K_0 \times \{0\} \cup K_1 \times
\{1\}$. More generally, for $n$-component links $L_0$ and $L_1$, a concordance is a disjoint union
of $n$ knot concordances between the components of $L_0$ and the components of $L_1$. Any embedded
cobordism $F \subset S^3 \times [0,1]$ is called \emph{ribbon} if projection to the $[0,1]$ factor
restricts to a Morse function on $F$ with only index $0$ and $1$ critical points. We say that $L_0$
is \emph{ribbon concordant} to $L_1$ if there exists a ribbon concordance from $L_0$ to $L_1$; note
that this is not a symmetric relation. For any cobordism $F$, let $\bar F$ denote the mirror of
$F$, viewed as a cobordism from $L_1$ to $L_0$.\footnote{Actually, Gordon defines
$F$ to be ribbon if it has only index $1$ and $2$ critical points; thus, a concordance $F$ is
ribbon in Gordon's sense iff $\bar F$ is ribbon in our sense. Our reversed convention was introduced
by the second author in \cite{ZemkeRibbon}. One justification for the change is that
we prefer to treat a ribbon disk for a knot $K$ as a cobordism from the empty link to
$K$, rather than vice versa.}

In a recent paper \cite{ZemkeRibbon}, the second author showed that knot Floer homology gives an obstruction to ribbon concordance. In this short note, we prove an analogous result for Khovanov homology \cite{KhovanovJones}.

Khovanov \cite{KhovanovJones} showed that any embedded link cobordism $F$ (not just a concordance)
in $\R^3 \times [0,1]$ between links $L_0$ and $L_1$ gives rise to a linear map
\[
\Kh(F) \co \Kh(L_0) \to \Kh(L_1).
\]
Subsequently, Khovanov \cite{KhovanovCobordism}, Jacobsson \cite{JacobssonCobordisms} and Bar-Natan
\cite{BarNatanTangles} showed that $\Kh(F)$ is invariant up to sign under isotopy of
$F$. Clark--Morrison--Walker \cite{ClarkMorrisonWalkerFunctoriality} and Caprau
\cite{CaprauCobordisms} then showed how to tweak the construction of $\Kh(F)$ so that is actually
completely invariant under isotopy of $F$, with no sign indeterminacy, and thus defines an honest
functor on the (suitably modified) cobordism category of links.\footnote{These invariance results are all proven for cobordisms in $\R^3 \times [0,1]$ rather than $S^3 \times [0,1]$. Since any cobordism in $S^3 \times
[0,1]$ can generically be assumed to miss some segment $\{s\} \times [0,1]$, we lose no generality
by considering cobordisms only in $\R^3 \times [0,1]$.} For our purposes, invariance up to sign is sufficient, so we will use the original construction of cobordism maps and not introduce the extra data needed to pin down signs. Note that when $C$ is a concordance between two knots, the map $\Kh(C)$ preserves both the homological and quantum gradings; see~\cite[\S3.4]{JacobssonCobordisms} and \cite[\S6]{BarNatanTangles}.

Our main result, which partially answers a question posed by Eisermann \cite[Question 7.7]{EisermannRibbon}, is:
\begin{theorem} \label{thm: injective}
If $C$ is a ribbon concordance from $L_0$ to $L_1$, the induced map
\[
\Kh(C) \co \Kh(L_0) \to \Kh(L_1)
\]
is injective, with left inverse given by $\Kh(\bar C)$. In particular, for any bigrading $(i,j)$, $\Kh^{i,j}(L_0)$ embeds in $\Kh^{i,j}(L_1)$ as a direct summand.
\end{theorem}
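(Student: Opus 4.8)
The strategy mirrors Zemke's argument in the knot Floer setting: show that the composition $\Kh(\bar C) \circ \Kh(C)$ is the identity on $\Kh(L_0)$. By functoriality of Khovanov homology, $\Kh(\bar C) \circ \Kh(C) = \Kh(\bar C \circ C)$, where $\bar C \circ C$ is the closed-up cobordism from $L_0$ to $L_0$ obtained by stacking $C$ and then its mirror. So the whole problem reduces to understanding this particular cobordism and proving $\Kh(\bar C \circ C) = \id$. (Injectivity of $\Kh(C)$ and the direct-summand statement in each bigrading are immediate formal consequences once we have a left inverse, using that $\Kh(C)$ preserves the bigrading.)

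The first key step is a geometric/Morse-theoretic decomposition of $\bar C \circ C$. Since $C$ is ribbon, it has only index-$0$ and index-$1$ critical points; $\bar C$ then has only index-$1$ and index-$2$ critical points. I would argue that $\bar C \circ C$ can be isotoped so that it is built from $L_0 \times [0,1]$ by first attaching some number of $0$-handles (small unknotted split unknots appear), then $1$-handles, then $1$-handles again (from the $\bar C$ side), then $2$-handles, and that after handle cancellation and rearrangement this cobordism is isotopic to a cobordism of a very special ``doubled'' form: for each band/saddle in $C$ one gets a canceling pair, and globally $\bar C \circ C$ becomes $L_0 \times [0,1]$ with a collection of split trivial $2$-spheres summed in, possibly after sliding handles past each other. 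This is the analogue of the fact (used by Zemke) that $\bar C \circ C$ is ``isotopic'' to the identity concordance after adding trivial pieces, or more precisely that it admits a handle decomposition exhibiting it as a composite of elementary cobordisms whose Khovanov maps we can compute directly.

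The second key step is the computation: birth of a split unknot induces the unit map $x \mapsto x \otimes 1$ (up to the normalization fixing signs), death induces $x \otimes 1 \mapsto x$, $x \otimes X \mapsto 0$, and a canceling saddle-pair (a $1$-handle immediately followed by its inverse $1$-handle on the same circles) induces a map that, composed appropriately, is the identity — because the merge-then-split (or split-then-merge) on the Frobenius algebra $A = \Z[X]/(X^2)$ with comultiplication $\Delta$ and multiplication $m$ satisfies the Frobenius relation, and one checks that the relevant composite $m \circ (\text{something}) \circ \Delta$ or the full tube relation gives $\id$ up to the contribution of the birthed sphere, which the subsequent death kills. Concatenating these elementary identities along the handle decomposition of $\bar C \circ C$ yields $\Kh(\bar C \circ C) = \id_{\Kh(L_0)}$.

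The main obstacle I anticipate is the first step: making rigorous the claim that $\bar C \circ C$ has the desired ``standard doubled form'' up to isotopy. Naively one wants to cancel each index-$0$ critical point of $\bar C$ against an index-$1$ critical point, but the cancellation must be organized coherently across the whole cobordism, and one must ensure the resulting elementary cobordisms are genuinely split/unknotted where claimed — this is exactly where ribbon-ness (no index-$2$ points in $C$, equivalently no index-$0$ points in $\bar C$ read upward) is essential, since it forces the ``caps'' of $\bar C$ to be capping off circles that were just created by bands, which are unknotted and unlinked from $L_0$. I would handle this by a careful Morse-theory rearrangement argument, following Zemke's Lemma on ribbon handle decompositions closely, and then invoke the functoriality and the elementary-cobordism computations above. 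One subtlety worth flagging is the sign/normalization issue: we must use the Clark--Morrison--Walker or Caprau normalization (honest functor, no sign indeterminacy) so that the elementary identities compose to $\id$ on the nose rather than up to sign; over $\F_2$ this is moot, but for the integral statement it matters.
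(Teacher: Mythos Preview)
Your overall strategy matches the paper's: set $D = \bar C \circ C$, use functoriality to write $\Kh(D) = \Kh(\bar C) \circ \Kh(C)$, invoke Zemke's topological description of $D$, and compute $\Kh(D) = \id$. The paper states Zemke's lemma in the precise form you need: $D$ is isotopic to $(L_0 \times [0,1]) \cup S_1 \cup \dots \cup S_n$ surgered along disjointly embedded $1$-handles $h_i$, each $h_i$ joining $L_0 \times [0,1]$ to $S_i$, where the $S_i$ are unknotted and unlinked but the $h_i$ may be arbitrarily embedded. Your phrase ``$L_0 \times [0,1]$ with split trivial $2$-spheres summed in'' is roughly this, though note that $D$ is not isotopic to the product cobordism itself.

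Where you diverge from the paper is in computing $\Kh(D)$, and here your sketch has a gap. A ``canceling saddle-pair'' does \emph{not} induce the identity: a saddle followed by its reverse inserts a tube, and on the Frobenius algebra this is $m \circ \Delta$ (or $\Delta \circ m$), neither of which is $\id$. Since the handles $h_i$ may be knotted, a direct movie-by-movie Frobenius computation is not obviously tractable without further isotopy arguments. The paper sidesteps this entirely by working in Bar-Natan's dotted-cobordism category and applying two local relations directly to the surface $D$: the neck-cutting relation expresses the map for a tubed cobordism as a sum of two dotted cobordisms (cut the tube, place a dot on either side), and the sphere relations say that a split unknotted undotted sphere contributes $0$ while a dotted one contributes $1$. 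Cutting each tube $h_i$ this way, one summand has an undotted $S_i$ and vanishes, the other has a dotted $S_i$ and equals $\Kh(L_0 \times [0,1])$; hence $\Kh(D) = \id$ with no movie computation and no dependence on how the tubes are embedded. Your phrase ``the full tube relation gives $\id$ up to the contribution of the birthed sphere'' is gesturing at exactly this, but the clean mechanism is the neck-cutting and sphere relations for dotted cobordisms, not a Frobenius-algebra calculation along a handle decomposition.
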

An immediate corollary is:
\begin{corollary} \label{cor: isomorphic}
If $L_0$ is ribbon concordant to $L_1$ and $L_1$ is ribbon concordant to $L_0$, then $\Kh(L_0)$ and
$\Kh(L_1)$ are isomorphic as bigraded groups.
\end{corollary}
Note that both Theorem \ref{thm: injective} and Corollary \ref{cor: isomorphic} hold with
coefficients in any ring. Corollary \ref{cor: isomorphic} provides further evidence for Gordon's
conjecture that two knots that are mutually ribbon concordant must be isotopic
\cite[Conjecture 1.1]{GordonRibbon}.

\subsection*{Applications}

Before proving Theorem \ref{thm: injective}, we state a few corollaries. For any link $L$, the
\emph{crossing number} $c(L)$ is the minimal number of crossings in any diagram of $L$. The
$\emph{Khovanov breadth}$ of $L$ is defined as $b_{\Kh}(L) := q_{\max}(L) - q_{\min}(L)$, where
$q_{\max}(L)$ and $q_{\min}(L)$ denote the maximum and minimum quantum gradings in which $\Kh(L)$ is
nonzero. For any link $L$, we have $b_{\Kh}(L) \le 2c(L)+2$, with equality if $L$ is a non-split
alternating link; this follows by adapting Murasugi's proof of the analogous statements for the
Jones polynomial \cite[Theorem~2]{MurasugiClassicalJones}. (See
\cite[Property~1.4]{AsaedaPrzytyckiTorsion}; note that the quantum grading there is twice
the one in \cite{KhovanovJones, BarNatanCategorification}.) Similarly, the \emph{Khovanov width}
$w_{\Kh}(L)$ is defined analogously to $b_{\Kh}(L)$ using the $\delta$ grading rather than the
quantum grading. We say that $L$ is \emph{$\Kh$-thin} if $w_{\Kh}(L)=2$, which is the minimum
possible value. Lee \cite{LeeKhovanov} proved that all non-split, alternating links are $\Kh$-thin;
this was extended to quasi-alternating links by Manolescu--Ozsv\'ath \cite{ManolescuOzsvathQA}.

As an immediate consequence of Theorem \ref{thm: injective}, we obtain:

\begin{corollary} \label{cor: breadth-width}
If $L_0$ is ribbon concordant to $L_1$, then $q_{\min}(L_1) \le q_{\min}(L_0)$, $q_{\max}(L_1) \ge q_{\max}(L_0)$, $b_{\Kh}(L_1) \ge b_{\Kh}(L_0)$, and $w_{\Kh}(L_1) \ge w_{\Kh}(L_0)$.
\end{corollary}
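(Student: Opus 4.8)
The plan is to obtain all four inequalities as formal consequences of Theorem~\ref{thm: injective}, using only that a concordance map preserves bigradings. First I would note that if $C$ is a ribbon concordance from $L_0$ to $L_1$, then $\Kh(C)$ preserves both the homological grading $i$ and the quantum grading $j$. For knot concordances this is recorded in the discussion preceding Theorem~\ref{thm: injective}; the general case is immediate because a link concordance is a disjoint union of knot concordances, hence has Euler characteristic $\chi(C)=0$, and the quantum degree of a cobordism map equals the Euler characteristic of the cobordism (while the homological grading is preserved by any cobordism map). Combining this with Theorem~\ref{thm: injective}, for every bigrading $(i,j)$ the map restricts to a (split) injection $\Kh^{i,j}(L_0)\hookrightarrow \Kh^{i,j}(L_1)$; in particular $\Kh^{i,j}(L_0)\neq 0$ forces $\Kh^{i,j}(L_1)\neq 0$.

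Next I would read off the quantum-grading bounds. Choose a homological grading $i$ with $\Kh^{i,\,q_{\min}(L_0)}(L_0)\neq 0$; then $\Kh^{i,\,q_{\min}(L_0)}(L_1)\neq 0$ by the previous step, so $q_{\min}(L_1)\le q_{\min}(L_0)$. The same argument applied to a bigrading realizing $q_{\max}(L_0)$ gives $q_{\max}(L_1)\ge q_{\max}(L_0)$. Subtracting the two inequalities yields $b_{\Kh}(L_1)=q_{\max}(L_1)-q_{\min}(L_1)\ge q_{\max}(L_0)-q_{\min}(L_0)=b_{\Kh}(L_0)$.

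For the width inequality I would observe that the $\delta$-grading $\delta=j-2i$ (up to the overall sign convention in force) is a fixed $\Z$-linear combination of $i$ and $j$, both of which $\Kh(C)$ preserves; hence $\Kh(C)$ preserves $\delta$ as well, and running the argument of the preceding paragraph with $\delta$ replacing $j$ gives $\delta_{\min}(L_1)\le\delta_{\min}(L_0)$ and $\delta_{\max}(L_1)\ge\delta_{\max}(L_0)$, so $w_{\Kh}(L_1)\ge w_{\Kh}(L_0)$.

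I do not anticipate a genuine obstacle: the corollary is a bookkeeping consequence of Theorem~\ref{thm: injective}. The only points worth stating carefully are the normalization of the $\delta$-grading (and the remark that it too is preserved) and the extension of bigrading-preservation from knot concordances to link concordances; both are routine.
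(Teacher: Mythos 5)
Your proposal is correct and is exactly the argument the paper intends: the paper presents this corollary as an immediate consequence of the bigraded split injection $\Kh^{i,j}(L_0)\hookrightarrow\Kh^{i,j}(L_1)$ from Theorem~\ref{thm: injective}, and your min/max bookkeeping in $j$ (and in $\delta$, which is preserved as a linear combination of the preserved gradings $i$ and $j$) is the intended elaboration.
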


\begin{corollary} \label{cor: alternating}
If $L_0$ is ribbon concordant to $L_1$, and $L_0$ is a non-split alternating link, then $c(L_0) \le
c(L_1)$. Thus, there are only finitely many alternating links that are ribbon concordant to a given
link.
\end{corollary}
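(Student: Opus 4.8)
The plan is to obtain this as a short consequence of Corollary~\ref{cor: breadth-width} combined with the classical dictionary between Khovanov breadth and crossing number recalled in the introduction. The underlying mechanism: for a non-split alternating link the Khovanov breadth \emph{equals} $2c+2$ (Murasugi's equality case), whereas for an arbitrary link it is \emph{at most} $2c+2$, and a ribbon concordance can only increase the breadth.

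So, assume $L_0$ is ribbon concordant to $L_1$ and that $L_0$ is a non-split alternating link. I would carry out the following three steps. (1) Apply the equality case of the breadth bound to $L_0$: $b_{\Kh}(L_0)=2c(L_0)+2$. (2) Apply Corollary~\ref{cor: breadth-width}: $b_{\Kh}(L_0)\le b_{\Kh}(L_1)$. (3) Apply the unconditional upper bound to $L_1$: $b_{\Kh}(L_1)\le 2c(L_1)+2$. Concatenating (1)--(3) yields $2c(L_0)+2\le 2c(L_1)+2$, hence $c(L_0)\le c(L_1)$, the first assertion. (The finer inequalities $q_{\min}(L_1)\le q_{\min}(L_0)$ and $q_{\max}(L_1)\ge q_{\max}(L_0)$ from Corollary~\ref{cor: breadth-width} could be carried along instead, but the breadth inequality suffices.)

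For the finiteness clause, fix the target link $L_1$ and set $N=c(L_1)$. Any non-split alternating $L_0$ ribbon concordant to $L_1$ then satisfies $c(L_0)\le N$ by the inequality just proved; since there are only finitely many link diagrams with at most $N$ crossings, there are only finitely many isotopy classes of links of crossing number $\le N$, and therefore only finitely many candidates for $L_0$. I do not expect a genuine obstacle in this argument; the only points to watch are that the \emph{equality} form of Murasugi's bound must be invoked for $L_0$ (legitimate exactly because $L_0$ is non-split and alternating) while only the inequality form is needed for $L_1$, and that a split alternating link --- having a disconnected reduced alternating diagram --- should either be treated by applying the inequality to each split summand or be excluded by the convention that ``alternating link'' means one admitting a connected alternating diagram; either way the count stays finite. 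All of this is insensitive to the coefficient ring, since Theorem~\ref{thm: injective}, and hence Corollary~\ref{cor: breadth-width}, hold over any ring.
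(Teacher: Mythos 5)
Your argument is exactly the intended one: combine the equality $b_{\Kh}(L_0)=2c(L_0)+2$ for non-split alternating $L_0$, the breadth inequality from Corollary~\ref{cor: breadth-width}, and the general bound $b_{\Kh}(L_1)\le 2c(L_1)+2$, then conclude finiteness from the finiteness of links with bounded crossing number; the paper treats this as immediate and gives no separate proof. Your remark about how to handle split alternating links in the finiteness clause is a sensible touch that the paper leaves unaddressed.
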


If $K_1,\dots, K_n$ are unlinked knots in $\R^3$, we say that $K'$ is a \emph{band connected sum} of $K_1,\dots, K_n$ if $K'$ is obtained by connecting $K_1,\dots, K_n$ with $n-1$ band additions (possibly in a very tangled fashion). Miyazaki \cite{MiyazakiBandSumRibbon} proved that if $K'$ is a band connected sum of $K_1,\dots, K_n$, then there is a ribbon concordance from the ordinary connected sum $K_1\# \cdots \# K_n$ to $K'$. This result, together with the inequalities stated above, immediately implies the following statements:

\begin{corollary}
If $K'$ is a band connected sum of alternating knots $K_1,\dots, K_n$, then
\[
c(K')\ge c(K_1)+\cdots+ c(K_n).
\]
\end{corollary}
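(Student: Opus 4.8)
The plan is to combine Miyazaki's ribbon concordance with the Khovanov-homology obstruction of Corollary~\ref{cor: alternating} (equivalently Corollary~\ref{cor: breadth-width}), and then to identify the resulting bound with the crossing number of an ordinary connected sum.

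First I would set $K := K_1 \# \cdots \# K_n$, the ordinary connected sum, and record two elementary facts about it. Since $K$ is a knot it is in particular non-split, and since each $K_i$ is alternating so is $K$: an alternating diagram of $K$ is obtained in the usual way by joining reduced alternating diagrams of the $K_i$. By Miyazaki's theorem~\cite{MiyazakiBandSumRibbon}, $K$ is ribbon concordant to $K'$.

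Now apply the obstruction. Since $K$ is a non-split alternating link that is ribbon concordant to $K'$, Corollary~\ref{cor: alternating} gives $c(K) \le c(K')$. (Unwinding this: Corollary~\ref{cor: breadth-width} gives $b_{\Kh}(K') \ge b_{\Kh}(K)$; as $K$ is non-split alternating we have $b_{\Kh}(K) = 2c(K)+2$, while $b_{\Kh}(K') \le 2c(K')+2$ in general, and rearranging yields the same inequality.) It then remains to compute $c(K)$. The bound $c(K) \le c(K_1)+\cdots+c(K_n)$ is immediate from juxtaposing minimal diagrams of the $K_i$. For the reverse inequality I would invoke the classical additivity of the crossing number under connected sum for alternating links: by the Kauffman--Murasugi--Thistlethwaite theorem the span of the Jones polynomial of an alternating link equals its crossing number~\cite{MurasugiClassicalJones}, and the Jones polynomial is multiplicative under connected sum, so these spans --- and hence the crossing numbers --- are additive. (The same additivity can instead be extracted from the behavior of reduced Khovanov homology under connected sum.) Combining, $c(K_1)+\cdots+c(K_n) = c(K) \le c(K')$.

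I do not expect a genuine obstacle: once Theorem~\ref{thm: injective} is in hand, this corollary is essentially formal. The only points needing care are the structural facts about alternating links --- verifying that $K_1 \# \cdots \# K_n$ is non-split and alternating, and importing the additivity of the crossing number --- together with, if one passes through Corollary~\ref{cor: breadth-width} explicitly rather than citing Corollary~\ref{cor: alternating}, keeping track of the additive constant $2$ that distinguishes the unreduced Khovanov breadth from its reduced counterpart.
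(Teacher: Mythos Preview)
Your proposal is correct and follows essentially the same route as the paper: Miyazaki's theorem gives a ribbon concordance from $K_1 \# \cdots \# K_n$ to $K'$, Corollary~\ref{cor: alternating} then yields $c(K_1 \# \cdots \# K_n) \le c(K')$, and the additivity of crossing number for alternating knots finishes the argument. The paper treats the corollary as immediate from the preceding results and leaves the additivity step implicit; your writeup simply spells out this step (via Kauffman--Murasugi--Thistlethwaite) and the fact that the connected sum is non-split and alternating.
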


\begin{corollary}
If $K'$ is a band connected sum of knots $K_1, \dots, K_n$, and $K'$ is $\Kh$-thin, then $K_1, \dots, K_n$ are $\Kh$-thin.
\end{corollary}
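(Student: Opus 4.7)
The plan is to combine Corollary~\ref{cor: breadth-width} with the standard K\"unneth-type formula for Khovanov homology of a connected sum. First, by Miyazaki's theorem, there is a ribbon concordance from $K_1 \# \cdots \# K_n$ to $K'$, so Corollary~\ref{cor: breadth-width} immediately yields
\[
2 \le w_{\Kh}(K_1 \# \cdots \# K_n) \le w_{\Kh}(K') = 2,
\]
and therefore the ordinary connected sum $K_1 \# \cdots \# K_n$ is itself $\Kh$-thin.

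To conclude that each factor $K_i$ is $\Kh$-thin, I would reduce to the case $n=2$ and invoke the classical connected sum formula for Khovanov homology: over any field $\mathbb{F}$,
\[
\Kh(K \# K'; \mathbb{F}) \otimes_{\mathbb{F}} \Kh(U; \mathbb{F}) \;\cong\; \Kh(K; \mathbb{F}) \otimes_{\mathbb{F}} \Kh(K'; \mathbb{F}),
\]
where $U$ is the unknot. Since $\Kh(U; \mathbb{F})$ has $\delta$-width exactly $2$, and tensor products of nonzero bigraded vector spaces over a field realize the sum of the extremal $\delta$-gradings of the factors, comparing widths on the two sides gives
\[
w_{\Kh}(K \# K') = w_{\Kh}(K) + w_{\Kh}(K') - 2.
\]
Hence $w_{\Kh}(K \# K') = 2$ forces both $w_{\Kh}(K) = 2$ and $w_{\Kh}(K') = 2$, and an easy induction on $n$ then shows that each $K_i$ is $\Kh$-thin.

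The main obstacle is citing and applying the connected sum formula with the correct $\delta$-grading bookkeeping. The formula itself is well-established (following from Khovanov's tangle decomposition together with the Frobenius algebra structure on $\Kh(U)$), and the grading calculation reduces to the elementary observation that extremal $\delta$-gradings are additive under tensor product over a field.
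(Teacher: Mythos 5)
Your argument is correct and is exactly what the paper intends: the corollary is stated there as an immediate consequence of Miyazaki's theorem together with Corollary~\ref{cor: breadth-width}, with the connected-sum step (reducing thinness of $K_1\#\cdots\#K_n$ to thinness of each $K_i$ via the K\"unneth-type formula) left implicit. The only bookkeeping point is to apply that formula with the same coefficients used to define $\Kh$-thinness (e.g.\ a fixed field), which is standard.
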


(See \cite[Section 1.3]{ZemkeRibbon} for further discussion of band connected sums.)

\subsection*{Proof of the main theorem}

The proof of Theorem \ref{thm: injective} follows directly from the behavior of the Khovanov
cobordism maps under two operations: disjoint union with unknotted $2$-spheres and surgery along
embedded $1$-handles. To state this result, we first recall that the Khovanov package also includes
maps associated to \emph{dotted cobordisms}, which are discussed briefly in
\cite[\S11.2]{BarNatanTangles} and then more extensively in \cite{CaprauCobordisms}. A dotted
cobordism is simply an embedded cobordism $C$ containing some finite set of marked points, which are
allowed to move around freely. The action of a dot is easy to describe on the level of chain
complexes (although we do not actually need this information for the argument below). Namely, for a
product cobordism $C = L \times I$, with a single dot, choose a marked point in a diagram for $L$
lying on the dotted component of $L$ and away from the crossings. We then obtain a chain map
$\CKh(L) \to \CKh(L)$, given by sending $v_+ \mapsto v_-$ and $v_- \mapsto 0$ on the marked
component of each resolution and extending by the identity on all other components.

The key properties that we need are the following:

\begin{proposition} \label{prop: relations}
Let $F \subset \R^3 \times [0,1]$ be an embedded cobordism from $L_0$ to $L_1$, possibly with dots.
\begin{enumerate}
\item \label{item: add-sphere} Suppose $S \subset \R^3 \times [0,1]$ is an unknotted $2$-sphere that is unlinked from $F$, and let $\dot S$ denote $S$ equipped with a dot. Then $\Kh(F \cup S)=0$ and $\Kh(F \cup \dot S) = \pm \Kh(F)$.

\item \label{item: 1-handle} Suppose $h$ is an embedded $3$-dimensional $1$-handle with ends on $F$ (and otherwise disjoint from $F$). Let $F'$ be obtained from $F$ by surgery along $h$, and let $\dot F_1$ and $\dot F_2$ be obtained by adding a dot to $F$ at either of the feet of $h$. Then $\Kh(F') = \pm \Kh(\dot F_1) \pm  \Kh(\dot F_2)$.
\end{enumerate}
\end{proposition}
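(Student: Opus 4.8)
The plan is to reduce both statements to local computations using the functoriality of the Khovanov cobordism maps and the TQFT structure on the chain/homology level. For part (\ref{item: add-sphere}), since $S$ is unknotted and unlinked from $F$, the cobordism $F \cup S$ factors (up to isotopy) as a composition: first $F$ alone from $L_0$ to $L_1$, then the creation of a small $2$-sphere near a point of $\R^3 \times \{1\}$ away from $L_1$. By functoriality, $\Kh(F \cup S) = \Kh(\Sigma_S) \circ \Kh(F)$, where $\Sigma_S$ is the disjoint union of a product cobordism $L_1 \times I$ with a closed surface (an undotted or dotted $2$-sphere), realized as a sequence of a birth and a death (respectively, a birth, a dot, a death). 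The TQFT evaluates the undotted $2$-sphere to $0$ (the composition $v_+ \mapsto 1 \mapsto 0$ under death-after-birth, since the death map is the counit sending $v_- \mapsto 1$, $v_+\mapsto 0$) and the dotted $2$-sphere to $1$ (the dot converts $v_+$ to $v_-$, which the death map sends to $1$). Hence $\Kh(\Sigma_S)$ is multiplication by $0$ or $1$ on $\Kh(L_1)$ respectively, giving $\Kh(F \cup S) = 0$ and $\Kh(F \cup \dot S) = \Kh(F)$. One should be slightly careful that the ``unlinked'' hypothesis genuinely lets us isotope $S$ to be a small standard sphere disjoint from $F$ sitting at a single time slice; this is where the topological hypothesis is used, and after that everything is formal TQFT.

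For part (\ref{item: 1-handle}), I would again localize. The $1$-handle $h$ is attached along two disjoint disks in $F$; choose a time slice where the two feet appear and isotope so that attaching $h$ is a single elementary saddle move between two nearby arcs lying on $F$. The point is that the Khovanov complex for the saddle cobordism, expressed via the Frobenius algebra $A = R[X]/(X^2)$, satisfies the relation that the saddle map equals $\Delta$ followed by multiplication — more precisely, the composite of a saddle and its reverse (a $1$-handle followed by the dual $1$-handle) equals multiplication by $X$ on the relevant tensor factor, i.e., equals the sum of the two dotted identity cobordisms ($X \otimes 1 + 1 \otimes X$ reorganizes, but in the configuration at hand after the handle the relevant identity is $\Delta$ then $m$, which on $A$ is $a \mapsto Xa$, and $Xa = \text{dot}_1(a) = \text{dot}_2(a)$ when there is only one circle, while on two circles it is $\text{dot}_1 + \text{dot}_2$). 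The cleanest route is the neck-cutting relation of Bar-Natan: cutting a neck (tube) in a cobordism replaces it with the sum of two copies of the cut, each with a dot on one of the two resulting disks. Surgery along $h$ is precisely such a neck-cutting operation on $F$ — cutting the tube $h$ produces $F'$ — so the neck-cutting relation immediately yields $\Kh(F') = \Kh(\dot F_1) + \Kh(\dot F_2)$, where the two terms correspond to the dot being placed on one foot or the other of $h$.

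The main obstacle I expect is bookkeeping about \emph{which} cobordism is which: one must verify that ``$F'$ obtained from $F$ by surgery along $h$'' is genuinely the cobordism obtained by neck-cutting the embedded tube $h \subset \R^3 \times [0,1]$, and that the two dotted cobordisms $\dot F_i$ in the statement match the two dotted summands in Bar-Natan's relation (including that the dots end up on $F$ itself, on small disks near the feet of $h$, and can then be freely moved as the statement allows). A secondary subtlety is making sure the neck-cutting / $2$-sphere relations, which are usually stated for the dotted Bar-Natan cobordism category or for the universal Frobenius algebra, descend correctly to the particular functorial theory $\Kh$ being used here (Clark--Morrison--Walker or Caprau); but since those theories are built from the same local Frobenius-algebra relations, the identities hold there as well. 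Once these identifications are pinned down, both parts follow from the standard local relations in the Khovanov TQFT with essentially no further computation.
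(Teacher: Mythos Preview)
Your approach is essentially the same as the paper's: both parts are reduced to Bar-Natan's local relations for dotted cobordisms (the sphere and dotted-sphere evaluations for part~(\ref{item: add-sphere}), the neck-cutting relation for part~(\ref{item: 1-handle})), after an ambient isotopy localizing the relevant piece to a small ball in a time slice. The paper phrases this via the category $\Kob_\bullet$ rather than computing directly in the Frobenius algebra, but the content is identical.

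One small bookkeeping slip, precisely of the kind you anticipated: in part~(\ref{item: 1-handle}) you write ``cutting the tube $h$ produces $F'$,'' but it is the reverse. The surgered cobordism $F'$ is the one \emph{containing} the tube (the attached $1$-handle), and cutting that neck recovers $F$ with a dot at one foot or the other; the neck-cutting relation then reads $\Kh(F') = \Kh(\dot F_1) + \Kh(\dot F_2)$, as you correctly conclude.
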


\begin{figure}
\labellist
 \pinlabel $=0$ [l] at 58 88
 \pinlabel $=1$ [l] at 158 88
 \pinlabel $=0$ [l] at 300 88
 \pinlabel $=$ at 144 21
 \pinlabel $+$ at 230 21
\endlabellist
\includegraphics{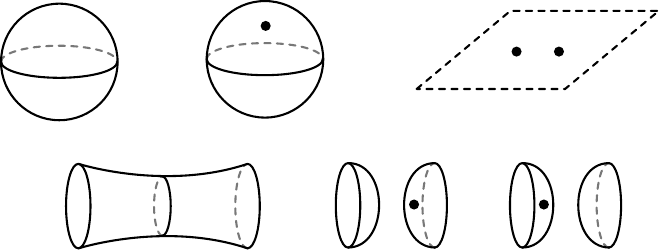}
\caption{Bar-Natan's local relations for dotted cobordisms.}\label{fig::1}
\end{figure}

\begin{proof}
Both of these properties follow from the ``local relations'' shown in Figure \ref{fig::1}, given in \cite[\S11.2]{BarNatanTangles} and \cite[\S2.2]{CaprauCobordisms}. To be precise, Bar-Natan showed that Khovanov's construction can really be thought of as taking values in a certain abelian category $\Kob_\bullet$, whose objects are ``formal chain complexes'' of closed, embedded $1$-manifolds in the plane, and whose morphisms are ``formal chain maps'' of dotted cobordisms in $\R^2 \times [0,1]$, considered up to boundary-preserving isotopies, and modulo the local relations. Applying Khovanov's $(1+1)$ dimensional TQFT then provides a functor from $\Kob_\bullet$ to the category of chain complexes over $\Z$, and the composition agrees with the original construction of Khovanov homology.

To prove \eqref{item: add-sphere}, we may perform an ambient isotopy of $\R^3 \times [0,1]$ so that the sphere $S$ lies in a $3$-dimensional slice $\R^3 \times \{t\}$ for some $t \in (0,1)$. The first two local relations in Figure \ref{fig::1} then indicate that that the morphism (in $\Kob_\bullet$) associated to $F \cup S$ is $0$, and that the morphism associated to $F \cup \dot S$ is the same as that associated to $F$, up to a sign. After applying the TQFT, this statement then translates to the corresponding statement for the actual maps of Khovanov homology groups. Likewise, to prove \eqref{item: 1-handle}, we perform an ambient isotopy so that $h$ lies within a small ball in a slice $\R \times \{t\}$, and the intersections of $F'$, $\dot F_1$, and $\dot F_2$ with this ball can be identified with the three pictures in the second row of Figure \ref{fig::1}. The morphisms in $\Kob_\bullet$ associated to the three cobordisms, each taken with some choice of signs, then satisfy the stated relation, so the maps on Khovanov homology groups do as well.
\end{proof}

As a consequence of Proposition \ref{prop: relations}, we have:

%\begin{proposition} \label{prop: attach-sphere}
%Let $F \subset \R^3$ be an embedded cobordism from $L_0$ to $L_1$, possibly with dots. Suppose $S$ is an unknotted $2$-sphere that is unlinked from $F$, and $h$ is an embedded $1$-handle with one end on $F$ and one end on $S$ (and otherwise disjoint from $F \cup S$). Let $F'$ be obtained from $F \cup S$ by surgery along $h$. Then $\Kh(F) = \Kh(F')$.
%\end{proposition}

\begin{proposition} \label{prop: attach-spheres}
Let $F \subset \R^3$ be an embedded cobordism from $L_0$ to $L_1$, possibly with dots. Let $S_1, \dots, S_k$ are disjoint, unknotted $2$-spheres that are unlinked from each other and from $F$, and let $h_1, \dots, h_k$ be disjointly embedded $1$-handles such that $h_i$ has one end on $F$ and one end on $S_i$. Let $F'$ be obtained from $F \cup S$ by surgery along $h$. Then $\Kh(F') = \pm \Kh(F)$.
\end{proposition}

\begin{proof}
For each $e = (e_1, \dots, e_k) \in \{0,1\}^k$, let $F_e$ denote the cobordism $F \cup S_1 \cup \dots \cup S_k$, with a dot on $S_i$ if and only if $e_i=1$. By induction using Proposition \ref{prop: relations}\eqref{item: 1-handle}, we have
\[
\Kh(F) = \sum_{e \in \{0,1\}^k} \pm \Kh(F_e)
\]
for some choices of signs. By Proposition \ref{prop: relations}\eqref{item: add-sphere}, we have $\Kh(F_e) = 0$ for every $e \ne (1,\dots,1)$, and $\Kh(F_{(1,\dots,1)}) = \pm \Kh(F)$.
\end{proof}

\begin{proof}[Proof of Theorem \ref{thm: injective}]
Let $C$ be a ribbon concordance from $L_0$ to $L_1$, and consider the reverse cobordism $\bar C$ from $L_1$ to $L_0$. Let $D$ denote the composite cobordism $\bar C \circ C$, which is a concordance from $C$ to itself. By the functoriality of Khovanov homology, $\Kh(D) = \Kh(\bar C) \circ \Kh(C)$. The second author showed in \cite{ZemkeRibbon} that $D$ has the following nice topological description: There exist unknotted, unlinked $2$-spheres $S_1, \dots S_n \subset (\R^3 \minus L_0) \times [0,1]$, and disjointly embedded $3$-dimensional $1$-handles $h_1, \dots, h_n$ in $\R^3 \times I$, where $h_i$ joins $L_0 \times [0,1]$ to $S_i$ and is disjoint from $S_j$ for $j \ne i$, such that $D$ is obtained from $(L_0 \times [0,1]) \cup S_1 \cup \dots \cup S_n$ by embedded surgery along the handles $h_1, \dots, h_n$. By Proposition \ref{prop: attach-spheres}, we have $\Kh(D) = \pm \Kh(L_0 \times [0,1]) = \pm \id_{\Kh(L_0)}$. It follows that $\Kh(C)$ is injective, with left inverse given by $\pm \Kh(\bar C)$.
\end{proof}

\bibliography{bibliography}
\bibliographystyle{amsalpha}

\end{document}